%% LyX 2.2.3 created this file.  For more info, see http://www.lyx.org/.
%% Do not edit unless you really know what you are doing.
\documentclass[11pt,english]{article}
\usepackage[T1]{fontenc}
\usepackage[latin9]{inputenc}
\usepackage{verbatim}
\usepackage{amsmath}
\usepackage{amsthm}
\usepackage{amssymb}
\usepackage{graphicx}

\makeatletter
%%%%%%%%%%%%%%%%%%%%%%%%%%%%%% Textclass specific LaTeX commands.
\theoremstyle{plain}
\newtheorem{thm}{\protect\theoremname}
  \theoremstyle{plain}
  \newtheorem{prop}[thm]{\protect\propositionname}
  \theoremstyle{plain}
  \newtheorem{lem}[thm]{\protect\lemmaname}
  \theoremstyle{plain}
  \newtheorem{question}[thm]{\protect\questionname}

%%%%%%%%%%%%%%%%%%%%%%%%%%%%%% User specified LaTeX commands.
\usepackage{fullpage}
\usepackage{hyperref}
\hypersetup{
  colorlinks   = true, %Colours links instead of ugly boxes
  urlcolor     = red, %Colour for external hyperlinks
  linkcolor    = blue, %Colour of internal links
  citecolor   = blue %Colour of citations, could be ``red''
}
\date{}
\sloppy

\@ifundefined{showcaptionsetup}{}{%
 \PassOptionsToPackage{caption=false}{subfig}}
\usepackage{subfig}
\makeatother

\usepackage{babel}
  \providecommand{\lemmaname}{Lemma}
  \providecommand{\propositionname}{Proposition}
  \providecommand{\questionname}{Question}
\providecommand{\theoremname}{Theorem}

\begin{document}
\global\long\def\goinf{\rightarrow\infty}
\global\long\def\gozero{\rightarrow0}
\global\long\def\bra{\langle}
\global\long\def\ket{\rangle}
\global\long\def\union{\cup}
\global\long\def\intersect{\cap}
\global\long\def\abs#1{\left|#1\right|}
\global\long\def\norm#1{\left\Vert #1\right\Vert }
\global\long\def\floor#1{\left\lfloor #1\right\rfloor }
\global\long\def\ceil#1{\left\lceil #1\right\rceil }
\global\long\def\expect{\mathbb{E}}
\global\long\def\e{\mathbb{E}}
\global\long\def\r{\mathbb{R}}
\global\long\def\n{\mathbb{N}}
\global\long\def\q{\mathbb{Q}}
\global\long\def\c{\mathbb{C}}
\global\long\def\p{\mathbb{P}}
\global\long\def\z{\mathbb{Z}}
\global\long\def\grad{\nabla}
\global\long\def\t{^{\prime}}
\global\long\def\all{\forall}
\global\long\def\eps{\varepsilon}
\global\long\def\quadvar#1{V_{2}^{\pi}\left(#1\right)}
\global\long\def\cross{\times}
\global\long\def\del{\nabla}
\global\long\def\parx#1{\frac{\partial#1}{\partial x}}
\global\long\def\pary#1{\frac{\partial#1}{\partial y}}
\global\long\def\parz#1{\frac{\partial#1}{\partial z}}
\global\long\def\part#1{\frac{\partial#1}{\partial t}}
\global\long\def\partheta#1{\frac{\partial#1}{\partial\theta}}
\global\long\def\parr#1{\frac{\partial#1}{\partial r}}
\global\long\def\curl{\nabla\times}
\global\long\def\rotor{\nabla\times}
\global\long\def\one{\mathbf{1}}
\global\long\def\Hom{\text{Hom}}
\global\long\def\pr#1{\text{Pr}\left[#1\right]}
\global\long\def\almost{\mathbf{\approx}}
\global\long\def\tr{\text{Tr}}
\global\long\def\var{\text{Var}}
\global\long\def\onenorm#1{\left\Vert #1\right\Vert _{1}}
\global\long\def\twonorm#1{\left\Vert #1\right\Vert _{2}}
\global\long\def\Inj{\mathfrak{Inj}}
\global\long\def\inj{\mathsf{inj}}
\global\long\def\discrete{\mathcal{C}_{n}}
\global\long\def\contin{\overline{\mathcal{C}_{n}}}
\global\long\def\lone{\mathrm{L}^{1}}
\global\long\def\ltwo{\mathrm{L}^{2}}
\global\long\def\elone{\mathrm{L}^{1}}
\global\long\def\eltwo{\mathrm{L}^{2}}
\global\long\def\Inf{\text{Inf}}

\global\long\def\g{\mathfrak{{\cal G}}}
\global\long\def\f{\mathfrak{{\cal F}}}
\global\long\def\tensor{\otimes}

\title{A conformal Skorokhod embedding}

\author{Renan Gross\thanks{Weizmann Institute of Science. Email: renan.gross@weizmann.ac.il.
Supported by the Adams Fellowship Program of the Israel Academy of
Sciences and Humanities.}}
\maketitle
\begin{abstract}
Start a planar Brownian motion and let it run until it hits some given
barrier. We show that the barrier may be crafted so that the $x$
coordinate at the hitting time has any prescribed centered distribution
with finite variance. This provides a new, complex-analytic proof
of the Skorokhod embedding theorem. Our method is constructive and
can give an explicit description of the barrier. 
\end{abstract}

\section{Introduction}

The Skorokhod embedding problem asks the following: Given a Brownian
motion $X_{t}$ and a probability distribution $\mu$ with expectation
zero and finite variance, find a stopping time $T$ so that $X_{t}\sim\mu$
and $\e T<\infty$.

There have been numerous solutions to this formulation and to several
variations and generalizations over the years. See Ob\l ój's extensive
survey \cite{obloj_skorkhod_embedding_and_its_offspring} for a detailed
account of the problem, its characteristics and applications.

One important solution is given by Root \cite{root_existence_of_certain_stopping_times_on_Brownian_motion},
who sets $T$ as the first time that the graph $\left(t,X_{t}\right)$
of the Brownian motion hits some barrier $\Omega\subseteq\r^{+}\times\r$.
Root's solution does not use any additional randomness. Finding out
the barrier $\Omega$, however, is often a difficult task, and not
many explicit solutions are known (but see \cite{gassiat_oberhauser_reis_root_barrier_viscosity}
for constructions relying on solutions to PDEs).

In this paper, we present a new solution to the Skorokhod embedding
problem. Our method is similar to Root's, in that the stopping time
is the first hitting time of some barrier by a Brownian motion. The
method requires additional randomness in the form of another independent
Brownian motion, but can offer tractable analytic expressions for
calculating the shape of the barrier explicitly. 

For a domain $\Omega\subseteq\r^{2}$ and a planar Brownian motion
$X_{t}=\left(X_{t}^{\left(1\right)},X_{t}^{\left(2\right)}\right)$
with $X_{0}\in\Omega$, let $T\left(X_{t},\Omega\right)$ be the first
time that $X_{t}$ exits the domain $\Omega$, 
\[
T\left(X_{t},\Omega\right)=\inf\left\{ t>0\mid X_{t}\notin\Omega\right\} .
\]
\begin{thm}
\label{thm:conformal_skorokhod_main_theorem}Let $\mu$ be a probability
distribution on $\r$ with zero expectation and finite variance. There
exists a simply connected domain $\Omega\subseteq\r^{2}$ containing
the origin such that if $Y_{t}$ is a standard planar Brownian, then
$Y_{T\left(Y_{t},\Omega\right)}^{\left(1\right)}$ has distribution
$\mu$. 
\end{thm}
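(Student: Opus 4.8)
The plan is to produce $\Omega$ as the conformal image of the unit disc under a map built explicitly from $\mu$, and then to read off the exit law from the conformal invariance of planar Brownian motion. Let $Q\colon(0,1)\to\r$ be the quantile function of $\mu$ and set $b(e^{2\pi is})=Q(s)$ for $s\in(0,1)$, regarding $\partial\mathbb D$ with its normalised arc-length measure. Then $b$ is nondecreasing in $s$, $\int_{\partial\mathbb D}b=\int_0^1Q=\int x\,d\mu=0$ since $\mu$ is centred, and $\|b\|_{\ltwo(\partial\mathbb D)}^2=\int x^2\,d\mu=\var(\mu)<\infty$. Let $u$ be the Poisson extension of $b$ (harmonic on $\mathbb D$, $u(0)=0$), let $v$ be its harmonic conjugate with $v(0)=0$, and put $\varphi=u+iv$; equivalently $\varphi(z)=\frac1{2\pi}\int_0^{2\pi}\frac{e^{i\theta}+z}{e^{i\theta}-z}\,Q\!\left(\tfrac{\theta}{2\pi}\right)d\theta$, which is the promised explicit description of the barrier. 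Since $b\in\ltwo$, $\varphi$ belongs to the Hardy space $H^2$; hence the radial limits $\varphi^{*}$ exist a.e.\ on $\partial\mathbb D$, with $\operatorname{Re}\varphi^{*}=b$ and $\operatorname{Im}\varphi^{*}$ equal to the conjugate function of $b$, which again lies in $\ltwo$ with the same norm (conjugation being an isometry on mean-zero $\ltwo$).

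The crux is to show that $\varphi$ is univalent, and I would argue via the level sets of $u=\operatorname{Re}\varphi$. Call $c$ a regular value if $\grad u\neq 0$ on $\{u=c\}$; the non-regular $c$ form a countable set, since they lie in $u\big(\{\varphi'=0\}\big)$. For a regular value $c$ in the interior of $\operatorname{supp}\mu$, every connected component of $\{u=c\}\subseteq\mathbb D$ is a properly embedded arc — a compact component would enclose a region on which $u\equiv c$, contradicting the maximum principle, and at a regular point the level set is a smooth curve, so a component cannot terminate inside $\mathbb D$ — so each component runs to $\partial\mathbb D$ at both ends, reaching $\partial\mathbb D$ only at points where the boundary data $b$ attains, or jumps across, the value $c$. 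Because $b$ is monotone on the circle, for generic $c$ there are exactly two such points: the unique continuity point with $b=c$, and the ``wrap-around'' jump of $b$ at $1\in\partial\mathbb D$ (gaps in $\operatorname{supp}\mu$ produce additional jumps, but the count is unaffected). Hence $\{u=c\}$ is a single arc $A$; along $A$ the gradient of $v$ is a quarter-turn of $\grad u$, so it is tangent to $A$ and nonzero, making $v$ strictly monotone along $A$ and $\varphi|_A=c+iv|_A$ injective. If now $\varphi(z_1)=\varphi(z_2)=w_0$ with $z_1\neq z_2$, the open mapping theorem forces a neighbourhood of $w_0$ to be covered with multiplicity $\geq 2$, so some $w$ whose real part is a regular value has two preimages on the single arc $\{u=\operatorname{Re}w\}$ — contradicting injectivity there. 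Thus $\varphi$ is univalent; set $\Omega=\varphi(\mathbb D)$, a simply connected domain containing $0=\varphi(0)$, with $\Omega\neq\c$ (else $\varphi^{-1}$ would be bounded and entire).

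Finally I would identify the exit distribution. For a simply connected $\Omega\subsetneq\c$ the complement in the Riemann sphere is a continuum, so $\c\setminus\Omega$ is non-polar and planar Brownian motion hits it a.s.; hence $T:=T(Y_t,\Omega)<\infty$ a.s. Run a planar Brownian motion $W$ from $0$ inside $\mathbb D$ up to its exit time $\sigma$, so $W_\sigma$ is uniform on $\partial\mathbb D$; by conformal invariance, $\varphi(W)$ is, after a time change, a planar Brownian motion in $\Omega$ from $0$, which exits $\Omega$ at the prime end corresponding to $W_\sigma$. Since $u\in H^2$, the martingale $u(W_t)=\e[\,b(W_\sigma)\mid\mathcal F_t\,]$ converges a.s.\ to $b(W_\sigma)$ as $t\uparrow\sigma$ (and likewise $\varphi(W_t)\to\varphi^{*}(W_\sigma)$), so $Y_T^{(1)}=\operatorname{Re}\varphi^{*}(W_\sigma)=b(W_\sigma)$, which has law $b_{*}(\text{uniform})=\mu$. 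Applying optional stopping to the martingale $|Y_t|^2-2t$ then gives $\e T=\tfrac12\e|Y_T|^2=\tfrac12\big(\var(\mu)+\|\operatorname{Im}\varphi^{*}\|_2^2\big)=\var(\mu)<\infty$, so the construction in fact yields the quantitative Skorokhod embedding as well.

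I expect the genuine difficulty to be less in the univalence argument — which is clean once the level-set picture is set up, though one should take care to exclude spiralling components and to treat the (countably many) critical values by a density argument — and more in the boundary analysis when $\mu$ is not finitely supported and $\partial\Omega$ is wild: the identifications $\operatorname{Re}\varphi^{*}=b$ a.e.\ and ``$\varphi(W)$ exits $\Omega$ at $\varphi^{*}(W_\sigma)$'' rely on Fatou's theorem and the $H^2$ estimates above, and it may be cleanest to prove the theorem first for $\mu$ with a bounded density of bounded support, where $\varphi$ extends continuously to $\overline{\mathbb D}$ and $\Omega$ is a Jordan domain, and then reach the general case by approximation together with a compactness (Carathéodory-convergence) argument for the domains.
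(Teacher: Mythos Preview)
Your proposal is correct and takes a genuinely different route from the paper, in two linked ways. First, you parametrise the boundary data \emph{monotonically}, setting $b(e^{2\pi is})=Q(s)$, whereas the paper uses the \emph{even} choice $\varphi(\theta)=G(|\theta|/\pi)$; consequently the paper's domain is symmetric under complex conjugation (a feature it exploits in its examples), while yours is not. Second, for univalence you run a level-set argument --- monotone boundary data forces each regular level set $\{u=c\}$ to be a single properly embedded arc, along which $v$ is strictly monotone --- while the paper instead approximates $G$ in $\lone$ by smooth strictly increasing $G_k$, shows each resulting $\psi_k$ maps $\partial D$ to a simple loop and is therefore univalent by the argument principle, and then passes to the limit via the Hurwitz-type theorem on locally uniform limits of injective analytic maps. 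These choices fit together: the paper's even boundary data is continuous on $\partial D$ once the support is bounded, so the approximants extend to $\overline D$ and the argument principle is available; your monotone $b$ always jumps at $1\in\partial D$, which makes the argument-principle route awkward but is exactly what gives the single-arc level-set picture (under the even parametrisation the level sets would split into two conjugate arcs). Your approach is more intrinsic and conceptual; the paper's is more hands-on and sidesteps the boundary issues you correctly flag --- indeed your suggested fallback (prove it first for smooth compactly supported $\mu$, then approximate) essentially reconverges to the paper's strategy. Both routes yield $\e T=\var\mu$ via optional stopping.
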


The proof of Theorem \ref{thm:conformal_skorokhod_main_theorem} is
given in the next section. Section \ref{sec:properties_and_examples}
gives properties and examples of $\Omega$. Finally, in Section \ref{sec:open_questions}
we exhibit some open questions. 

\section{\label{sec:proof_of_theorem}Proof of theorem}
\begin{proof}
We identify $\r^{2}$ with the complex plane $\c$. Denote the open
unit disc by $D=\left\{ z\in\c\mid\abs z<1\right\} $ and the unit
circumference by $\partial D=\left\{ z\in\c\mid\abs z=1\right\} $.

Denote by $F:\r\to\left[0,1\right]$ the cumulative distribution function
of $\mu$, and by $G:\left(0,1\right)\to\r$ its generalized inverse:
\[
G\left(y\right)=\inf\left\{ x\in\r\mid F\left(x\right)\geq y\right\} .
\]
Observe that both $F$ and $G$ are monotone increasing. Despite the
fact that $G$ may diverge at $0$ and $1$, it is still in $\ltwo$:
If $X$ is a uniform random variable on $\left[0,1\right]$ then $G\left(X\right)$
distributes as $\mu$, and so 
\begin{align*}
\int_{0}^{1}G^{2}\left(y\right)dy & =\var\mu.
\end{align*}
Define the function $\varphi:\left(-\pi,\pi\right)\backslash\left\{ 0\right\} \to\r$
by 
\begin{equation}
\varphi\left(\theta\right)=G\left(\frac{\abs{\theta}}{\pi}\right).\label{eq:definition_of_phi}
\end{equation}
The function $\varphi$ may be viewed as a periodic function on $\r$
with period $2\pi$, with the possibility of it diverging at integer
multiples of $\pi$. It is an even function in $\ltwo$, and thus
has a Fourier series representation containing only cosines. Denote
the $n$-th Fourier coefficient of $\varphi$ by $\hat{\varphi}\left(n\right)$,
and recall that by Carleson's theorem \cite{carleson_pointwise_convergence}
for an $\ltwo$ function, the Fourier representation $\sum_{n=0}^{\infty}\hat{\varphi}\left(n\right)\cos\left(\theta n\right)$
agrees with $\varphi$ almost everywhere. 

Let $\psi:D\to\c$ be the complex function defined by 
\begin{equation}
\psi\left(z\right)=\sum_{n=0}^{\infty}\hat{\varphi}\left(n\right)z^{n}.\label{eq:definition_of_psi}
\end{equation}
The function $\psi$ may not necessarily be defined for points on
the unit circle, but its real part is well-defined for almost all
such points. Indeed, let $z=e^{i\theta}$ with $\theta\in\left[-\pi,\pi\right]$,
then
\begin{align*}
\mathrm{Re}\psi\left(z\right) & =\mathrm{Re}\psi\left(e^{i\theta}\right)\\
 & =\mathrm{Re}\sum_{n=0}^{\infty}\hat{\varphi}\left(n\right)e^{i\theta n}\\
 & =\sum_{n=0}^{\infty}\hat{\varphi}\left(n\right)\cos\theta n.
\end{align*}
So on the unit circle, the real part of $\psi\left(e^{i\theta}\right)$
agrees with $\varphi\left(\theta\right)$ almost everywhere. 

Since $\lim_{n\to\infty}\hat{\varphi}\left(n\right)=0$ by the Riemann-Lebesgue
lemma, the function $\psi$ is analytic (and non constant) in the
open disc $D$, and so the image $\Omega=\psi\left(D\right)$ is some
connected domain in $\c$. As all the coefficients of $\psi$ are
real, this domain is symmetric to conjugation: $\psi\left(\overline{z}\right)=\overline{\psi\left(z\right)}$,
i.e it is symmetric to reflection about the $x$ axis. 
\begin{prop}
$\psi$ is one-to-one in the unit disc $D$. 
\end{prop}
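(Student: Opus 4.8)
The plan is to prove the one-sided estimate
\[
\mathrm{Re}\!\left[(z^{2}-1)\,\psi'(z)\right]\ \ge\ 0\qquad(z\in D),
\]
a condition of Hengartner--Schober type, and to read injectivity off from the geometric rigidity it forces. Throughout I take $\mu$ non-degenerate, so that $\varphi$, and hence $\psi$, is non-constant.

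For the estimate I would first produce an explicit integral representation of $\psi'$ that encodes the monotonicity of $G$. Using $\hat{\varphi}(n)=\tfrac{2}{\pi}\int_{0}^{\pi}\varphi(t)\cos(nt)\,dt$ for $n\ge1$, summing $\sum_{n\ge1}z^{n}\cos(nt)=\tfrac12\bigl(\tfrac{ze^{it}}{1-ze^{it}}+\tfrac{ze^{-it}}{1-ze^{-it}}\bigr)$ under the integral sign, differentiating in $z$, and integrating by parts in $t$, the endpoint contributions at $t=0,\pi$ cancel and one is left with
\[
\psi'(z)\ =\ -\frac{2}{\pi}\int_{0}^{\pi}\frac{\sin t}{1-2z\cos t+z^{2}}\;d\varphi(t),
\]
where $d\varphi$ is the positive, locally finite Lebesgue--Stieltjes measure of the non-decreasing function $\varphi$ on $(0,\pi)$. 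Multiplying by $z^{2}-1$ and using the partial-fraction identity $\tfrac{1-z^{2}}{1-2z\cos t+z^{2}}=\tfrac12\bigl(\tfrac{1+ze^{it}}{1-ze^{it}}+\tfrac{1+ze^{-it}}{1-ze^{-it}}\bigr)$ gives
\[
(z^{2}-1)\,\psi'(z)\ =\ \frac{1}{\pi}\int_{0}^{\pi}\left(\frac{1+ze^{it}}{1-ze^{it}}+\frac{1+ze^{-it}}{1-ze^{-it}}\right)\sin t\;d\varphi(t).
\]
On $D$ each Cayley kernel has non-negative real part, while $\sin t\ge0$ and $d\varphi\ge0$ on $(0,\pi)$, so the right-hand side has non-negative real part, which is the estimate. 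Write $q(z)=(z^{2}-1)\psi'(z)$. If $\mathrm{Re}\,q$ is constant the estimate makes $\psi$ an explicit M\"obius--logarithm map $\alpha\log\tfrac{1+z}{1-z}+\beta$, which is plainly injective; otherwise the minimum principle improves $\mathrm{Re}\,q\ge0$ to $\mathrm{Re}\,q>0$ throughout $D$, so that $\psi'$ has no zero in $D$.

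To finish, I would exploit that the curves $\gamma_{\alpha}=\{z\in D:\arg\tfrac{1+z}{1-z}=\alpha\}$, $\alpha\in(-\tfrac{\pi}{2},\tfrac{\pi}{2})$, are exactly the circular arcs through $\pm1$ and the integral curves of the vector field $z\mapsto z^{2}-1$. Along $\gamma_{\alpha}$ oriented from $1$ to $-1$ one has $\tfrac{d}{ds}\mathrm{Re}\,\psi=\mathrm{Re}[\psi'(z)(z^{2}-1)]=\mathrm{Re}\,q(z)>0$, so $\mathrm{Re}\,\psi$ is strictly increasing along each $\gamma_{\alpha}$; and since each $\gamma_{\alpha}$ meets $\partial D$ transversally at $\pm1$ (two circles through the two common points $\pm1$), the approach to $\pm1$ along $\gamma_{\alpha}$ is non-tangential, so $\mathrm{Re}\,\psi$ — the Poisson integral of $\varphi$ — tends there to $\varphi(0^{+})=G(0^{+})$ and $\varphi(\pi^{-})=G(1^{-})$, the same two limits for every $\alpha$. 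Thus $\mathrm{Re}\,\psi$ restricts to a strictly increasing bijection of each $\gamma_{\alpha}$ onto $I:=\bigl(G(0^{+}),G(1^{-})\bigr)$, and $\mathrm{Re}\,\psi(D)\subseteq I$. Fixing $c\in I$ and setting $E_{c}=\{\mathrm{Re}\,\psi=c\}$: since $\psi'\ne0$, $E_{c}$ is a smooth $1$-manifold meeting each $\gamma_{\alpha}$ in exactly one point and transversally (because $\mathrm{Re}\,q\ne0$), so the leaf projection restricts to a bijective local diffeomorphism $E_{c}\to(-\tfrac{\pi}{2},\tfrac{\pi}{2})$; hence $E_{c}$ is a single arc with a regular parametrization $\alpha\mapsto\sigma(\alpha)$, and along it $\psi\circ\sigma=c+i\,\mathrm{Im}\,\psi(\sigma)$ with $(\psi\circ\sigma)'=\psi'(\sigma)\,\sigma'\ne0$ purely imaginary, so $\mathrm{Im}\,\psi(\sigma(\cdot))$ is strictly monotone and $\psi$ is injective on $E_{c}$. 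Finally, $\psi(z_{1})=\psi(z_{2})$ forces $\mathrm{Re}\,\psi(z_{1})=\mathrm{Re}\,\psi(z_{2})=c\in I$, hence $z_{1},z_{2}\in E_{c}$, hence $z_{1}=z_{2}$.

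The soft inputs ($\varphi\in\ltwo$, Poisson integrals, non-tangential limits, Carleson a.e.\ convergence) I would keep in the background; the one genuinely delicate point is the integral representation of $\psi'$ when the support of $\mu$ is unbounded, for then $\varphi$ diverges at $0$ and $\pi$ and $d\varphi$ carries infinite mass. There I would use that $\sqrt{\theta}\,\varphi(\theta)\to0$ as $\theta\to0^{+}$ (from $\varphi\in\ltwo$ together with monotonicity) to see that the boundary terms in the integration by parts still vanish in the limit, and that $\int_{0}^{\pi}\sin t\;d\varphi(t)=-\int_{0}^{\pi}\varphi(t)\cos t\,dt<\infty$, so that all the displayed integrals converge absolutely; an alternative is to prove the estimate first for compactly supported $\mu$ and then pass to the limit, since $\mathrm{Re}[(z^{2}-1)\psi'(z)]\ge0$ is preserved under locally uniform convergence. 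This approximation step is the part I expect to cost the most care.
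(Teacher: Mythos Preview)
Your argument is correct and genuinely different from the paper's. The paper proceeds by approximation: it replaces $G$ by smooth bounded strictly increasing $G_k$, checks that each $\psi_k$ extends continuously to $\overline{D}$ and sends $\partial D$ to a simple closed curve, deduces univalence of $\psi_k$ from the argument principle, and then invokes a Hurwitz-type theorem for the locally uniform limit $\psi$. Your route is direct: from the monotonicity of $\varphi$ you extract the integral representation of $(z^{2}-1)\psi'(z)$ as a positive superposition of Cayley kernels, obtain $\mathrm{Re}\,[(z^{2}-1)\psi'(z)]\ge 0$, upgrade to strict positivity via the minimum principle, and then read off univalence geometrically by foliating $D$ with the integral curves of $z\mapsto z^{2}-1$. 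What your approach buys is extra structural information for free: the inequality is exactly the Hengartner--Schober condition, so it shows not only that $\psi$ is univalent but that $\Omega=\psi(D)$ is convex in the direction of the imaginary axis (every vertical line meets $\Omega$ in an interval), which the paper does not obtain; it also avoids constructing the somewhat delicate approximants $G_k$. The paper's approach, on the other hand, uses only very classical tools (Hurwitz, argument principle) and sidesteps the integration-by-parts and non-tangential-limit analysis you need to handle unbounded $\varphi$. Your treatment of that unbounded case (the $\sqrt{\theta}\,\varphi(\theta)\to 0$ bound from $\varphi\in\ltwo$ plus monotonicity, and the finiteness of $\int\sin t\,d\varphi$) is fine; the claim that $\mathrm{Re}\,\psi$ tends to $G(0^{+})$ and $G(1^{-})$ non-tangentially, even when these are infinite, is indeed routine from the Poisson-kernel localization.
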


\begin{proof}
The proof relies on the following theorem, which can be found in \cite[chapter VIII.3]{theodore_gamelin_complex_analysis}:
\begin{thm}
Let $\left\{ f_{k}\left(z\right)\right\} _{k=1}^{\infty}$ be a sequence
of one-to-one analytic functions on a domain $D$ that converge uniformly
on every compact subset of $D$ to a function $f$. Then $f$ is either
one-to-one or constant. %
\begin{comment}
https://math.stackexchange.com/questions/1582983/uniform-limit-of-one-to-one-analytic-functions-is-either-constant-or-one-to-one
\end{comment}
\end{thm}

All we have to do then is find a sequence of one-to-one functions
$\psi_{k}$ which converge to $\psi$ uniformly on every compact subset
of $D$. Let $\left\{ G_{k}:\left[0,1\right]\to\r\right\} _{k=1}^{\infty}$
be a sequence of bounded, twice differentiable, strictly increasing
functions satisfying
\begin{enumerate}
\item $G_{k}\to G$ in $\lone$.
\item \label{enu:approximations_are_in_l1}The first and second derivatives
of $G_{k}^{\left(s\right)}$ are in $\lone$. 
\item \label{enu:approximations_have_zero_derivative}$G_{k}'\left(0\right)=G_{k}'\left(1\right)=0$
and $G_{k}''\left(0\right)=G_{k}''\left(1\right)=0$.
\end{enumerate}
Such a sequence may be found, for example, by taking step-function
approximations of $G$ on increasingly finer partitions of $\left(0,1\right)$,
smoothingly interpolating the jump discontinuities, and adding some
small strictly increasing smooth function. 

For each $G_{k}$, define the corresponding $\varphi_{k}$ and $\psi_{k}$
as in equations (\ref{eq:definition_of_phi}) and (\ref{eq:definition_of_psi}).
By properties (\ref{enu:approximations_are_in_l1}) and (\ref{enu:approximations_have_zero_derivative})
above, the $\varphi_{k}$ are also smooth and their derivatives are
all in $\lone$. Using the fact that if a function $f$ is $s$-times
differentiable and $f^{\left(s\right)}\in\lone$ then $\abs{\hat{f}\left(n\right)}\leq\norm{f^{\left(s\right)}}_{1}/n^{s}$,
we get that the Fourier coefficients $\left\{ \hat{\varphi}_{k}\left(n\right)\right\} _{n=0}^{\infty}$
are absolutely convergent, and so $\psi_{k}$ can be extended to the
closed disc $\overline{D}$. This allows us to look at the image of
the unit circle $\partial D$ under $\psi_{k}$.

Parameterize the circle by $\gamma\left(\theta\right)=e^{i\theta}$
for $\theta\in\left[-\pi,\pi\right]$. As $\theta$ increases from
$-\pi$ to $0$, $\mathrm{Re}\psi_{k}\left(\gamma\left(\theta\right)\right)=\varphi_{k}\left(\theta\right)$
strictly decreases from $G_{k}\left(1\right)$ to $G_{k}\left(0\right)$,
and as $\theta$ increases from $0$ to $\pi$, $\mathrm{Re}\psi_{k}\left(\gamma\left(\theta\right)\right)$
strictly increases from $G_{k}\left(0\right)$ to $G_{k}\left(1\right)$.
Further, using the Hilbert transform equation (\ref{eq:hilbert_transform_definition})
in Section \ref{sec:properties_and_examples}, it can be directly
calculated that $\mathrm{Im}\psi_{k}\left(\gamma\left(\theta\right)\right)$
is always positive for $\theta\in\left(-\pi,0\right)$ and always
negative for $\theta\in\left(0,\pi\right)$. Thus, the image $\psi_{k}\left(\partial D\right)$
is a simple loop. Since the preimage of $\partial\psi_{k}\left(D\right)$
is a subset of $\partial D$ (this is a consequence of the maximum
principle: If an interior point $z\in D$ were mapped to $\partial\psi_{k}\left(D\right)$,
we could obtain a local maximum for $\abs{\psi_{k}}$), we have that
$\psi_{k}\left(\partial D\right)=\partial\psi_{k}\left(D\right)$.
So for every $x\in\psi_{k}\left(D\right)$, $\psi_{k}\left(\partial D\right)$
winds around $x$ exactly once. By Cauchy's argument principle, this
means that $x$ has exactly one preimage in $D$ under $\psi_{k}$,
i.e $\psi_{k}$ is one-to-one. 

All that remains is to show that $\left\{ \psi_{k}\right\} _{k=1}^{\infty}$
converge uniformly to $\psi$ on every compact subset $A\subseteq D$
. To see this, note that $A$ is contained in some closed disc of
radius $\rho<1$. Denoting $z=re^{i\theta}\in A$ with $0\leq r\leq\rho$,
we have 
\begin{align*}
\abs{\psi\left(z\right)-\psi_{k}\left(z\right)} & =\abs{\sum_{n=0}^{\infty}\left(\hat{\varphi}\left(n\right)-\hat{\varphi}_{k}\left(n\right)\right)r^{n}e^{in\theta}}\\
 & \leq\sum_{n=0}^{\infty}\abs{\left(\hat{\varphi}\left(n\right)-\hat{\varphi}_{k}\left(n\right)\right)r^{n}e^{in\theta}}\\
 & \leq\sup_{n}\abs{\left(\hat{\varphi}\left(n\right)-\hat{\varphi}_{k}\left(n\right)\right)}\sum_{n=0}^{\infty}r^{n}.
\end{align*}
The sum $\sum_{n=0}^{\infty}r^{n}$ converges, and the supremum converges
to $0$ uniformly in $k$ since for each $n\in\n$ we have 
\begin{align*}
\abs{\hat{\varphi}\left(n\right)-\hat{\varphi}_{k}\left(n\right)} & =\abs{\frac{1}{2\pi}\int_{-\pi}^{\pi}\left(\varphi\left(\theta\right)-\varphi_{k}\left(\theta\right)\right)\cos\left(n\theta\right)d\theta}\\
 & \leq\frac{1}{2\pi}\int_{-\pi}^{\pi}\abs{\varphi\left(\theta\right)-\varphi_{k}\left(\theta\right)}d\theta,
\end{align*}
and this integral converges to $0$ since $\varphi_{k}$ converges
to $\varphi$ in $\lone$. 
\end{proof}
Having established that $\psi$ is one-to-one, we can invoke the conformal
invariance of Brownian motion, which can be found in \cite[Theorem 7.20]{peres_morters_brownian_motion}:
\begin{thm}
Let $U$ be a domain in the complex plain, $x\in U$ and $f:U\to\Omega$
be analytic. Let $\left\{ X_{t}\mid t\geq0\right\} $ be a planar
Brownian motion started in $x$. Then the process $\left\{ f\left(X_{t}\right)\mid0\leq t\leq T\left(X_{t},U\right)\right\} $
is a time-changed Brownian motion, i.e there exists a planar Brownian
motion $\left\{ Y_{t}\mid t\geq0\right\} $ such that for any $t\in\left[0,T\left(X_{t},U\right)\right)$,
\[
f\left(X_{t}\right)=Y_{\zeta\left(t\right)}
\]
where 
\[
\zeta\left(t\right)=\int_{0}^{t}\abs{f'\left(X_{s}\right)}^{2}ds.
\]
If $f$ is one-to-one then $\zeta\left(T\left(X_{t},U\right)\right)$
is the first exit time from $\Omega$ by $\left\{ Y\mid t\geq0\right\} $.
\end{thm}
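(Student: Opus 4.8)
The plan is to realize $f(X_t)$ as a planar continuous local martingale whose two coordinates are orthogonal and are run by a common increasing clock, and then to invoke a time-change theorem of Dubins--Schwarz / Knight type to identify it with a time-changed planar Brownian motion. Write $f=u+iv$ with $u,v$ harmonic on $U$. Applying It\^o's formula to $u(X_t)$ and $v(X_t)$ and using $\Delta u=\Delta v=0$, the finite-variation terms vanish, so $M_t:=f(X_t)-f(x)$, stopped at $T\left(X_t,U\right)$, is a $\c$-valued continuous local martingale with $du(X_t)=u_x\,dX_t^{\left(1\right)}+u_y\,dX_t^{\left(2\right)}$ and similarly for $v$. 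Since $X_t^{\left(1\right)}$ and $X_t^{\left(2\right)}$ are independent standard one-dimensional Brownian motions, the Cauchy--Riemann equations $u_x=v_y$, $u_y=-v_x$ give $d\langle\mathrm{Re}\,M\rangle_t=d\langle\mathrm{Im}\,M\rangle_t=(u_x^2+u_y^2)(X_t)\,dt=\abs{f'(X_t)}^2\,dt$ and $d\langle\mathrm{Re}\,M,\mathrm{Im}\,M\rangle_t=(u_xv_x+u_yv_y)(X_t)\,dt=0$. Hence both coordinates of $M$ are continuous local martingales with the same quadratic variation $\zeta(t)=\int_0^t\abs{f'(X_s)}^2\,ds$, and they are orthogonal to each other.

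Next I would check that $\zeta$ is a genuine time change and not merely nondecreasing. Since $f$ is analytic and non-constant, $f'$ vanishes only on a discrete, hence countable, set $Z\subset U$, and a planar Brownian motion almost surely avoids any fixed countable set, so $\{s\ge 0:X_s\in Z\}$ has Lebesgue measure zero almost surely; therefore $\zeta$ is almost surely strictly increasing and continuous on $[0,T\left(X_t,U\right))$ and admits a continuous strictly increasing inverse $\tau$ on $[0,\zeta(T\left(X_t,U\right)))$. Now I invoke the planar Dubins--Schwarz theorem (equivalently Knight's theorem on orthogonal continuous local martingales): a two-dimensional continuous local martingale whose coordinates are orthogonal with common quadratic variation $\zeta$ becomes, under the substitution $s\mapsto M_{\tau(s)}$, a planar Brownian motion. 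Setting $Y_s:=f(x)+M_{\tau(s)}$ — and, when $\zeta(T\left(X_t,U\right))<\infty$, extending $Y$ beyond that time by splicing on an independent planar Brownian motion on a suitably enlarged probability space — produces a planar Brownian motion $Y$ with $f(X_t)=f(x)+M_t=Y_{\zeta(t)}$ for every $t\in[0,T\left(X_t,U\right))$.

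For the final assertion, suppose $f$ is one-to-one. Then $f:U\to\Omega=f(U)$ is a conformal bijection, hence a proper map: as $t\uparrow T\left(X_t,U\right)$ the point $X_t$ leaves every compact subset of $U$, so $f(X_t)$ leaves every compact subset of $\Omega$, i.e.\ $\mathrm{dist}(f(X_t),\partial\Omega)\to 0$. On the other hand $f(X_t)=Y_{\zeta(t)}\in\Omega$ for all $t<T\left(X_t,U\right)$, and because $\zeta$ maps $[0,T\left(X_t,U\right))$ onto $[0,\zeta(T\left(X_t,U\right)))$ we get $Y_s\in\Omega$ for every $s<\zeta(T\left(X_t,U\right))$, so $T\left(Y_t,\Omega\right)\ge\zeta(T\left(X_t,U\right))$. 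Combining this with the fact that $Y_s\to\partial\Omega$ as $s\uparrow\zeta(T\left(X_t,U\right))$ (properness again) yields $T\left(Y_t,\Omega\right)=\zeta(T\left(X_t,U\right))$.

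I expect the main obstacle to be the time-change step: stating and applying the multidimensional Dubins--Schwarz / Knight theorem correctly, and in particular the bookkeeping when $\zeta(T\left(X_t,U\right))$ is finite — for instance when $T\left(X_t,U\right)=\infty$ but $\int_0^\infty\abs{f'(X_s)}^2\,ds$ converges — where one must enlarge the probability space and adjoin an independent Brownian motion to obtain a process $Y$ defined for all times. A secondary point requiring care is the claim that the occupation measure of planar Brownian motion assigns zero time to the zero set of $f'$, which is what makes $\zeta$ a true homeomorphism onto its range and makes the identity $f(X_t)=Y_{\zeta(t)}$ hold pathwise rather than only in law.
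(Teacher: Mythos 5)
The paper does not prove this theorem; it quotes it verbatim as Theorem 7.20 of M\"orters and Peres, so there is no ``paper's own proof'' to compare against. Your argument is exactly the standard textbook proof one finds there: It\^o's formula plus harmonicity to see that $f(X_t)$ is a $\c$-valued continuous local martingale, the Cauchy--Riemann equations to identify the two quadratic variations with $\zeta$ and make the cross-variation vanish, and Knight's theorem (the orthogonal, common-clock form of Dubins--Schwarz) to produce the planar Brownian motion $Y$, with the usual enlargement of the probability space when $\zeta(T(X_t,U))<\infty$. The one place where you are slightly too quick is the final paragraph: the assertion that ``as $t\uparrow T(X_t,U)$ the point $X_t$ leaves every compact subset of $U$'' is correct when $T(X_t,U)<\infty$, but fails when $T(X_t,U)=\infty$ (planar Brownian motion is neighborhood-recurrent and does not escape to the boundary). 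In that case one should argue instead that if $\zeta(\infty)<\infty$ and $Y_{\zeta(\infty)}$ were an interior point of $\Omega$, then $X_t=f^{-1}(Y_{\zeta(t)})$ would converge in $U$, contradicting recurrence; and if $\zeta(\infty)=\infty$ the equality of exit times is vacuous. This is a minor technicality --- and irrelevant for the paper's application, where $U$ is the unit disc so $T(X_t,U)<\infty$ almost surely --- but it is worth closing. Your strict-monotonicity remark about $\zeta$ is not actually needed: Dubins--Schwarz/Knight already gives the pathwise identity $M_t=B_{\langle M\rangle_t}$ even across intervals of constancy of the clock.
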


Thus, the process $\psi\left(X_{t}\right)$ is a time-changed Brownian
motion, i.e there exists a planar Brownian motion $Y_{t}$ and a monotone
function $\zeta\left(t\right)$ such that $\psi\left(X_{t}\right)=Y_{\zeta\left(t\right)}$.
The domain $\Omega$ has the properties that we are looking for: 
\begin{enumerate}
\item Upon hitting the boundary, $Y_{t}^{\left(1\right)}$ distributes as
$Y_{T\left(Y_{t},\Omega\right)}^{\left(1\right)}\sim\mu$: The position
of $X_{T\left(X_{t},D\right)}$ is uniform on the unit circle and
the real part of $\psi\left(\partial D\right)$ is made of two (reflected)
copies of $G=F^{-1}$, so by the inverse transform sampling method
we get that $\mathrm{Re}Y_{T\left(Y_{t},\Omega\right)}$ distributes
as $\mu$.
\item $Y_{t}$ hits the boundary $\partial\Omega$ in finite time. This
is an immediate consequence of the following lemma:
\begin{lem}[Lemma 1.1 in \cite{banuelos_carrol_brownian_motion_drum}]
Suppose that $f\left(w\right)=\sum_{n=0}^{\infty}a_{n}w^{n}$ is
a conformal mapping from the unit disc $D$ onto a domain $\Omega$
with $f\left(0\right)=z$. Then 
\begin{align}
\e_{z}\left(T\left(Y_{t},\Omega\right)\right) & =\frac{1}{2}\sum_{n=1}^{\infty}\abs{a_{n}}^{2}.\label{eq:formula_for_expected_stopping_time}
\end{align}
\end{lem}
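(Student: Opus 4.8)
The plan is to transfer everything to the unit disc via the conformal invariance of Brownian motion quoted above and then to evaluate one explicit occupation-time integral. Since $f$ is conformal with $f\left(0\right)=z$, let $X_{t}$ be a planar Brownian motion started at $0$ and run until its first exit time $T\left(X_{t},D\right)$ from $D$. By the conformal invariance theorem there is a planar Brownian motion $Y_{t}$ started at $f\left(0\right)=z$ with $f\left(X_{t}\right)=Y_{\zeta\left(t\right)}$, where $\zeta\left(t\right)=\int_{0}^{t}\abs{f'\left(X_{s}\right)}^{2}\,ds$; and because $f$ is one-to-one, $\zeta\left(T\left(X_{t},D\right)\right)$ is exactly the first exit time of $Y$ from $\Omega=f\left(D\right)$. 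Consequently
\[
\e_{z}\left(T\left(Y_{t},\Omega\right)\right)=\e\left[\zeta\left(T\left(X_{t},D\right)\right)\right]=\e\int_{0}^{T\left(X_{t},D\right)}\abs{f'\left(X_{s}\right)}^{2}\,ds .
\]

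Next I would apply the occupation-time (Green's function) formula for Brownian motion in the disc: for every nonnegative measurable $g$,
\[
\e\int_{0}^{T\left(X_{t},D\right)}g\left(X_{s}\right)\,ds=\int_{D}g\left(w\right)G_{D}\left(0,w\right)\,dA\left(w\right),\qquad G_{D}\left(0,w\right)=\frac{1}{\pi}\log\frac{1}{\abs w},
\]
with $dA$ area measure; the constant $1/\pi$ is pinned down by requiring that $g\equiv1$ return the known value $\e_{0}T\left(X_{t},D\right)=\tfrac12$. Taking $g=\abs{f'}^{2}\geq0$ there are no integrability issues (both sides are allowed to be $+\infty$, which is precisely the case when $\Omega$ has infinite area), so Tonelli's theorem licenses every interchange below. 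Writing $w=re^{i\theta}$ and $f'\left(w\right)=\sum_{n\geq1}na_{n}w^{n-1}$, orthogonality of $\left\{ e^{in\theta}\right\} $ kills the cross terms, so that
\[
\frac{1}{2\pi}\int_{0}^{2\pi}\abs{f'\left(re^{i\theta}\right)}^{2}\,d\theta=\sum_{n\geq1}n^{2}\abs{a_{n}}^{2}r^{2n-2}.
\]
Passing to polar coordinates, the radial weight attached to the circular average is $2r\log\frac{1}{r}$, and using $\int_{0}^{1}r^{2n-1}\log\frac{1}{r}\,dr=\frac{1}{4n^{2}}$ one obtains
\[
\e_{z}\left(T\left(Y_{t},\Omega\right)\right)=\int_{0}^{1}2r\log\frac{1}{r}\sum_{n\geq1}n^{2}\abs{a_{n}}^{2}r^{2n-2}\,dr=\sum_{n\geq1}n^{2}\abs{a_{n}}^{2}\cdot\frac{1}{2n^{2}}=\frac{1}{2}\sum_{n\geq1}\abs{a_{n}}^{2},
\]
which is the asserted identity.

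The one genuinely substantive ingredient is the occupation-time / Green's function representation for the disc; everything after it is bookkeeping. I expect the only mild traps to be keeping the Brownian-motion normalization straight (so that $G_{D}\left(0,\cdot\right)$ carries the factor $1/\pi$, matching the generator $\tfrac12\Delta$ that underlies the $\zeta\left(t\right)=\int\abs{f'}^{2}$ time change), and remembering that the identity is to be read in $\left[0,\infty\right]$, so that an unbounded $\Omega$ needs no separate argument. As a consistency check, $f\left(w\right)=w$ recovers $\e_{0}T\left(X_{t},D\right)=\tfrac12$.
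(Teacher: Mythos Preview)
Your argument is correct. The paper itself does not supply a proof of this lemma: it is quoted verbatim as Lemma~1.1 of Ba\~nuelos--Carroll and used as a black box, so there is no in-paper proof to compare against. Your route---transport the exit time to the disc via the conformal time change, rewrite the resulting $\e\int_{0}^{T\left(X_{t},D\right)}\abs{f'\left(X_{s}\right)}^{2}\,ds$ as an integral against the Green kernel $G_{D}\left(0,w\right)=\tfrac{1}{\pi}\log\tfrac{1}{\abs w}$, and then evaluate termwise using orthogonality on circles and $\int_{0}^{1}r^{2n-1}\log\tfrac{1}{r}\,dr=\tfrac{1}{4n^{2}}$---is exactly the standard proof (and is, in fact, essentially how Ba\~nuelos and Carroll argue). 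The normalization check $f\left(w\right)=w\Rightarrow\e_{0}T\left(X_{t},D\right)=\tfrac12$ is a good sanity test, and your remark that the identity is to be read in $\left[0,\infty\right]$ via Tonelli correctly handles the case where $\sum\abs{a_{n}}^{2}=\infty$.
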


Since $\varphi$ is in $\ltwo$, the right hand side of equation (\ref{eq:formula_for_expected_stopping_time})
is finite. Since $\left(Y_{t}^{\left(1\right)}\right)^{2}-t$ is a
martingale with expectation $0$, we get by the optional stopping
theorem that 
\[
\e\left(Y_{T\left(Y_{t},\Omega\right)}^{\left(1\right)}\right)^{2}-\e T\left(Y_{t},\Omega\right)=0,
\]
implying $\e T\left(Y_{t},\Omega\right)=\var\mu$.
\item The process $Y_{t}$ starts at $0$, i.e $Y_{0}=\psi\left(0\right)=0$:
The imaginary component is $0$ since $\psi$ is symmetric to complex
conjugation, and the real component is $0$ since $Y_{t}$ is a martingale
and $\mathrm{Re}Y_{t}$ distributes as $\mu$, so that $\mathrm{Re}Y_{0}=\int_{\r}xd\mu=0$. 
\item $\Omega$ is simply connected by Brouwer's invariance of domain theorem,
which states that:
\begin{thm}[Theorem 1 in chapter 1 in \cite{spivak_comprehensive_introduction_to_differential_geometry}]
If $U\subseteq\r^{n}$ is open and $f:U\to\r^{n}$ is one-to-one
and continuous, then f is a homeomorphism.
\end{thm}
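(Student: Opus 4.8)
The plan is to prove this classical statement (Brouwer's invariance of domain) along the standard route, reducing it to the Jordan--Brouwer separation theorem. First note that it suffices to show that $f$ is an \emph{open} map, i.e.\ that it carries open subsets of $U$ to open subsets of $\r^{n}$: granting this, the bijection $f\colon U\to f(U)$ has continuous inverse, hence is a homeomorphism onto its image $f(U)$, which is then in particular open. To show $f$ is open it is in turn enough to show that for every closed Euclidean ball $\bar{B}$ with $\bar{B}\subseteq U$, the image $f(\mathrm{int}\,\bar{B})$ of its interior is open in $\r^{n}$; for then, given any open $V\subseteq U$ and any $x\in V$, picking such a ball around $x$ inside $V$ exhibits $f(x)$ as an interior point of $f(\mathrm{int}\,\bar{B})\subseteq f(V)$.

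So fix such a $\bar{B}$. Since $\bar{B}$ is compact and $\r^{n}$ is Hausdorff, the continuous injection $f|_{\bar{B}}$ is a homeomorphism onto its image; hence $f(\partial B)$ is a topologically embedded $(n-1)$-sphere in $\r^{n}$ and $f(\bar{B})$ is a topologically embedded closed $n$-disc. I would then invoke two classical facts: (i) \textbf{(Jordan--Brouwer)} the complement $\r^{n}\setminus f(\partial B)$ has exactly two connected components, one bounded and one unbounded, each having $f(\partial B)$ as its boundary; and (ii) the complement $\r^{n}\setminus f(\bar{B})$ is connected. Both are proved by induction on dimension using reduced singular homology and the Mayer--Vietoris sequence: for an embedded $k$-disc $h(D^{k})\subseteq S^{n}$ one shows $\tilde{H}_{*}(S^{n}\setminus h(D^{k}))=0$, and for an embedded $k$-sphere $h(S^{k})\subseteq S^{n}$ that $\tilde{H}_{i}(S^{n}\setminus h(S^{k}))=\z$ when $i=n-k-1$ and is $0$ otherwise; the cases $k=n-1$ and $k=n$ give (i) and (ii) respectively (after transferring between $S^{n}=\r^{n}\cup\{\infty\}$ and $\r^{n}$).

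To conclude, write $W=\r^{n}\setminus f(\partial B)$ and observe the disjoint decomposition $W=f(\mathrm{int}\,\bar{B})\,\sqcup\,\bigl(\r^{n}\setminus f(\bar{B})\bigr)$. Both pieces are connected --- the first as the continuous image of a ball, the second by (ii) --- so each lies entirely in one component of $W$. Since $\r^{n}\setminus f(\bar{B})$ is unbounded it lies in the unbounded component $W_{\infty}$; therefore the nonempty bounded component $W_{0}$ is contained in $f(\mathrm{int}\,\bar{B})$, and since $f(\mathrm{int}\,\bar{B})$ is connected and meets $W_{0}$ it must in fact equal $W_{0}$. Components of the open set $W$ are open, because $\r^{n}$ is locally connected, so $f(\mathrm{int}\,\bar{B})=W_{0}$ is open, which is exactly what the reduction required.

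The main obstacle is item (i)--(ii): the Jordan--Brouwer separation theorem together with the non-separation statement for embedded discs is the genuinely topological input, and it is not elementary --- its proof needs algebraic topology (the Mayer--Vietoris induction sketched above, or an equivalent Alexander-duality / degree-theoretic argument). Everything else in the argument is point-set bookkeeping once this is in hand. (One can alternatively derive invariance of domain from the Brouwer fixed point theorem without homology, via a direct extension/retraction construction, but that route is of comparable difficulty.)
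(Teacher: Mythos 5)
The paper does not prove this statement at all: it is Brouwer's invariance of domain, quoted verbatim as a black box with a citation to Spivak's text, so there is no ``paper's proof'' to compare against. Your sketch is a correct and standard derivation: the reduction to openness, the reduction to closed balls $\bar{B}\subseteq U$, the use of compactness to get that $f|_{\bar{B}}$ is an embedding, the disjoint decomposition $\r^{n}\setminus f(\partial B)=f(\mathrm{int}\,\bar{B})\sqcup(\r^{n}\setminus f(\bar{B}))$, and the component-counting argument are all sound. You are also right to flag where the real content lies --- the Jordan--Brouwer separation statement for an embedded $(n-1)$-sphere together with the non-separation statement for an embedded $n$-disc --- and that these require genuine algebraic topology (Mayer--Vietoris induction, Alexander duality, or a degree-theoretic/fixed-point argument); as presented, your write-up cites these two facts rather than proving them, so it is a correct reduction rather than a complete self-contained proof. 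Since the paper itself only invokes the theorem, this is an entirely appropriate level of detail.
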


\end{enumerate}
\end{proof}

\section{\label{sec:properties_and_examples}Properties and examples}

In this section, we will see examples of domains $\Omega$ for various
distributions. Numeric approximations of $\Omega$ for Bernoulli,
Gaussian and Cantor distributions, as well as some distribution on
the natural numbers, can be found in Figure \ref{fig:examples_of_domains}. 

\begin{figure}
\begin{centering}
\subfloat[$\pm1$ Bernoulli distribution]{\includegraphics[scale=0.45]{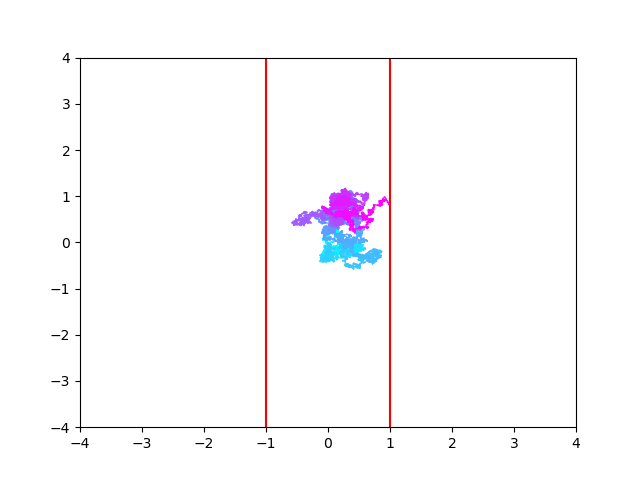}

}\subfloat[{Uniform distribution on $\left[-1,1\right]$\label{fig:uniform_distribution_boundary}}]{\includegraphics[scale=0.45]{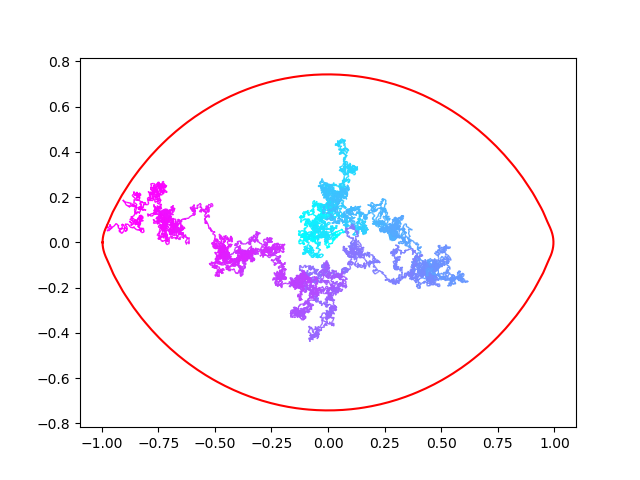}

}
\par\end{centering}
\begin{centering}
\subfloat[$\mu\left(\left\{ k\right\} \right)=\frac{1}{2^{k+1}}$ for integer
$k\geq0$. Here the Brownian does not start at $0$, since $\mu$
is not centered.]{\includegraphics[scale=0.45]{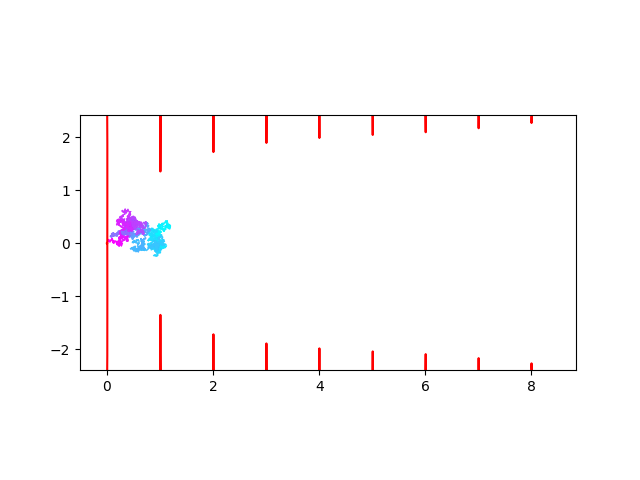}

}
\par\end{centering}
\begin{centering}
\subfloat[$N\left(0,1\right)$ Gaussian distribution]{\includegraphics[scale=0.45]{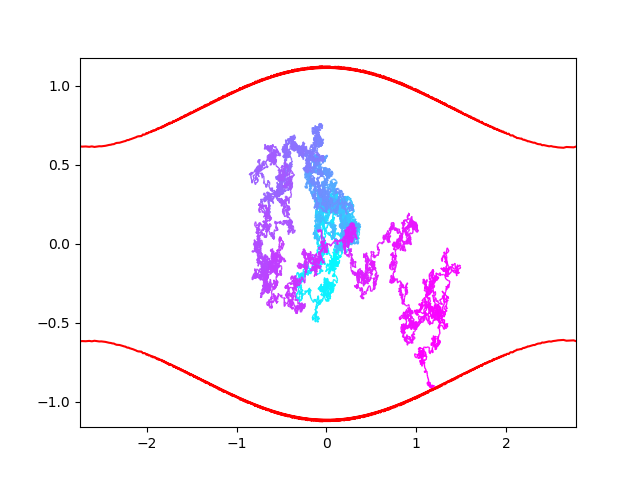}

}\subfloat[Cantor distribution. The fractal boundary is approximated by truncating
the very-slowly-converging Fourier series. All vertical line segments
should be extend to infinity.]{\includegraphics[scale=0.45]{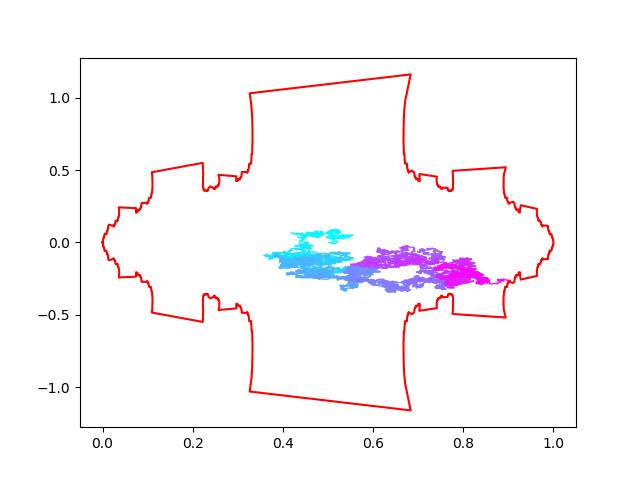}

}
\par\end{centering}
\caption{Examples of domains $\Omega$ for various distributions.\label{fig:examples_of_domains}}
\end{figure}

Some easy properties of $\Omega$ can immediately be gathered from
$\mu$:
\begin{enumerate}
\item \label{enu:domain_properties_atom}If $x$ is an atom of $\mu$, i.e
$\mu\left(\left\{ x\right\} \right)>0$, then $\partial\Omega$ contains
a straight line segment at $x$, i.e $\left\{ x\right\} \times\left[a,b\right]\subseteq\partial\Omega$
for some numbers $a,b$. 
\item \label{enu:domain_properties_no_interval}If $\left(a,b\right)$ is
an interval with $\mu\left(\left(a,b\right)\right)=0$ then $\Omega$
contains the infinite rectangle $\left(a,b\right)\times\left(-\infty,\infty\right)$.
\item \label{enu:domain_properties_infinite_x}If the support of $\mu$
is infinite, then the projection of $\Omega$ onto the $x$ axis is
unbounded.
\end{enumerate}
One can describe $\Omega$ by looking at its boundary $\partial\Omega$,
and in general it is useful to do this by considering how the curve
$\gamma\left(\theta\right)=\psi\left(e^{i\theta}\right)$ behaves
as $\theta$ traverses from $-\pi$ to $\pi$. Of course, for some
distributions both the real part and the imaginary part of $\psi\left(e^{i\theta}\right)$
can be infinite, in the latter case even countably many times, so
$\gamma\left(\theta\right)$ really sits in the Riemann sphere.

When calculating $\psi\left(e^{i\theta}\right)$, the real part is
already known - it is just equal to $\varphi\left(\theta\right)$
- so all that is left to do is to find the imaginary part. The relation
between the real part and the imaginary part of $\psi\left(e^{i\theta}\right)$
is given by replacing all cosines with sines in the series expansion
of $\psi$:
\begin{align*}
\mathrm{Re}\psi\left(e^{i\theta}\right) & =\sum_{n=0}^{\infty}\hat{\varphi}\left(n\right)\cos\theta n,\\
\mathrm{Im}\psi\left(e^{i\theta}\right) & =\sum_{n=1}^{\infty}\hat{\varphi}\left(n\right)\sin\theta n.
\end{align*}
Alternatively, the imaginary part is obtained from the real part by
the Hilbert transform operator $H$, which, for a function $u:\r\to\r$,
is given by 
\begin{equation}
\left(Hu\right)\left(x\right)=\frac{1}{\pi}PV\int_{-\infty}^{\infty}\frac{u\left(\tau\right)}{t-\tau}d\tau,\label{eq:hilbert_transform_definition}
\end{equation}
where $PV$ is Cauchy's principal value. One way to see that this
is true is to note that the Hilbert transform is linear and maps the
function $\cos nx$ to $\sin nx$ for positive integers $n$. For
a comprehensive source on the Hilbert transform, see \cite{king_hilbert_transforms}.

There are therefore two straightforward ways to go about studying
$\mathrm{Im}\psi$. The first is to inspect the series $\sum_{n=1}^{\infty}\hat{\varphi}\left(n\right)\sin\theta n$
directly, and the second is to consider the Hilbert transform of $\varphi$.
In the next two subsections, we will see both approaches. 

\subsection{Discrete distributions}

Let $\mu$ be an atomic distribution, supported on a (possibly infinite)
discrete set of values $\left\{ x_{i}\right\} _{i}$, which obtains
$x_{i}$ with probability $p_{i}$: 
\[
\mu\left(x\right)=\sum_{i}p_{i}\delta_{x_{i}}\left(x\right).
\]
The cumulative distribution function is then just a sum of step functions,
\[
F\left(x\right)=\sum_{i}p_{i}\one_{x\geq x_{i}},
\]
and so is the inverse $G\left(y\right)$ and the symmetric $\varphi_{\mu}\left(\theta\right)$.
We write
\[
\varphi\left(\theta\right)=\sum_{i}\alpha_{i}\one_{\abs{\theta}\geq\theta_{i}}
\]
for some weights $\alpha_{i}$ and thresholds $\theta_{i}$ (these
can be calculated explicitly from $p_{i}$ and $x_{i}$, but we omit
the calculations for brevity). 

As noted in items (\ref{enu:domain_properties_atom}) and (\ref{enu:domain_properties_no_interval})
above, the boundary $\partial\Omega$ consists of infinite rays of
the form $\left\{ x_{i}\right\} \times\left(y_{i},\infty\right]$
and $\left\{ x_{i}\right\} \times\left(-y_{i},\infty\right]$ for
some values $y_{i}\geq0$. If there is an extremal value $x=\min_{i}\left\{ x_{i}\right\} $
or $x=\max_{i}\left\{ x_{i}\right\} $ among the $x_{i}$'s, then
clearly $\partial\Omega$ contains the infinite line $\left\{ x\right\} \times\left(-\infty,\infty\right)$. 

As the $y_{i}$'s give a complete characterization of $\Omega$, calculating
them may be of interest.
\begin{thm}
Each $y_{i}$ is given by 
\[
y_{i}=\frac{1}{\pi}\sum_{j}\alpha_{j}\log\abs{\frac{\sin\left(\frac{\theta_{j}}{2}-\frac{x}{2}\right)}{\sin\left(\frac{\theta_{j}}{2}+\frac{x}{2}\right)}},
\]
where $x$ is a solution to the equation 
\[
\sum_{i}\alpha_{i}\left(\cot\left(\frac{\theta_{i}}{2}-\frac{x}{2}\right)+\cot\left(\frac{\theta_{i}}{2}+\frac{x}{2}\right)\right)=0.
\]
\end{thm}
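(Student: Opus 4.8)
The plan is to evaluate $\psi$ on the unit circle in closed form for the atomic $\mu$, read off from the imaginary part the two vertical rays that $\partial\Omega$ contributes over each atom $x_i$, and then locate the finite endpoint of those rays by a one--variable optimization. First I would compute the Fourier coefficients of $\varphi(\theta)=\sum_i\alpha_i\one_{\abs{\theta}\geq\theta_i}$: using evenness and $\int_{-\pi}^{\pi}\one_{\theta_i\leq\abs{\theta}\leq\pi}\cos(n\theta)\,d\theta=-2\sin(n\theta_i)/n$, one gets $\hat\varphi(n)=-\frac{2}{\pi n}\sum_i\alpha_i\sin(n\theta_i)$ for $n\geq1$. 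Since $\mathrm{Im}\,\psi(e^{i\theta})=\sum_{n\geq1}\hat\varphi(n)\sin(n\theta)$ by the relation recorded above, substituting the coefficients and applying the classical identity $\sum_{n\geq1}\frac{\cos(n\phi)}{n}=-\log\!\left(2\abs{\sin(\phi/2)}\right)$ term by term — valid on the open arcs where $\theta$ avoids every $\theta_i$, which is where the series converges — collapses the double sum to
\[
\mathrm{Im}\,\psi\!\left(e^{i\theta}\right)=g(\theta):=\frac{1}{\pi}\sum_i\alpha_i\log\abs{\frac{\sin\!\left(\tfrac{\theta}{2}-\tfrac{\theta_i}{2}\right)}{\sin\!\left(\tfrac{\theta}{2}+\tfrac{\theta_i}{2}\right)}}.
\]
This is just the assertion that $g$ is the circle Hilbert transform of the step function $\varphi$, computed building block by building block; equivalently $\psi$ itself is the corresponding linear combination of logarithms $\tfrac{i}{\pi}\sum_i\alpha_i\log\frac{1-ze^{-i\theta_i}}{1-ze^{i\theta_i}}$ plus a constant.

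Next I would pin down which part of $\partial\Omega$ sits over a fixed non-extremal atom $x_i$. Such $x_i$ is the value of $\varphi$ on an arc $I=(\theta_L,\theta_R)$ with $0<\theta_L<\theta_R<\pi$, bounded by two genuine jumps $\alpha_L,\alpha_R>0$. On $I$ the real part of $\psi(e^{i\theta})$ is frozen at $x_i$, so the boundary curve $\gamma(\theta)=\psi(e^{i\theta})$ moves up and down the vertical line $\{x_i\}\times\r$ at height $g(\theta)$; because $\alpha_L,\alpha_R>0$ we have $g(\theta)\to-\infty$ as $\theta\to\theta_L^+$ and as $\theta\to\theta_R^-$, so $g$ attains a maximum $M$ on $I$ and $g(I)=(-\infty,M]$. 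Reflecting in the real axis (the arc $-I$) produces the companion ray $\{x_i\}\times[-M,\infty)$. Hence $\partial\Omega\cap(\{x_i\}\times\r)$ consists precisely of these two rays, with finite endpoints at heights $\pm M$, so the quantity $y_i$ of the statement equals $M=\max_{\theta\in I}g(\theta)$. (For an extremal atom the arc $I$ abuts $\pm\pi$, where $g$ has boundary value $0$ and turns out to be monotone, recovering the full line $\{x_i\}\times\r$ already noted in the text; the formula below is the non-extremal case, i.e. the case in which the critical equation has a root inside $I$.)

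Finally I would locate $M$ by calculus. Differentiating $g$ gives
\[
g'(\theta)=-\frac{1}{2\pi}\sum_i\alpha_i\!\left(\cot\!\left(\tfrac{\theta_i}{2}-\tfrac{\theta}{2}\right)+\cot\!\left(\tfrac{\theta_i}{2}+\tfrac{\theta}{2}\right)\right),
\]
so the maximizer $x\in I$ is exactly a solution of the transcendental equation in the statement, and $y_i=g(x)$ is the first display evaluated at $x$. To make $x$ unambiguous and confirm it is the maximizer, write $h(\theta)$ for the bracketed sum; then $h'(\theta)=\tfrac12\sum_i\alpha_i\bigl(\csc^2(\tfrac{\theta_i}{2}-\tfrac{\theta}{2})-\csc^2(\tfrac{\theta_i}{2}+\tfrac{\theta}{2})\bigr)$, and each summand is positive since $\sin^2\tfrac{\theta_i-\theta}{2}<\sin^2\tfrac{\theta_i+\theta}{2}$ is equivalent to $\cos(\theta_i-\theta)>\cos(\theta_i+\theta)$, i.e. to $2\sin\theta_i\sin\theta>0$, which holds for $\theta_i,\theta\in(0,\pi)$. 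Thus $h$ is strictly increasing on $I$; running from $-\infty$ to $+\infty$ there, it has a single zero, the unique critical point of $g$ on $I$, which is therefore its maximum.

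The hard part will be the middle step: correctly identifying the piece of $\partial\Omega$ that lies over $x_i$ and arguing that the relevant height is precisely the extreme value of $\mathrm{Im}\,\psi$ along the arc, together with the bookkeeping of signs and the separate, degenerate behaviour at extremal atoms. By contrast, the first step is a routine Fourier computation once the logarithmic sum for $\sum\cos(n\phi)/n$ is invoked, and the last step is elementary calculus plus the monotonicity estimate for $h$.
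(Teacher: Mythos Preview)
Your proposal is correct and follows the same overall strategy as the paper: compute $\mathrm{Im}\,\psi(e^{i\theta})$ in closed form for the atomic $\mu$, then locate the $y_i$ as critical values by differentiation. The difference lies in how the closed form is obtained. The paper proves a lemma computing the periodic Hilbert transform of a single symmetric step $\one_{\abs{\theta}\geq\theta_0}$ by writing it as an infinite train of square pulses on $\r$, applying the non-periodic Hilbert transform pulse by pulse, and collapsing the resulting infinite product via Euler's identity $\sin x=x\prod_{n\geq1}(1-x^2/n^2\pi^2)$. You instead compute the Fourier coefficients $\hat\varphi(n)$ directly and sum $\sum_{n\geq1}\hat\varphi(n)\sin(n\theta)$ using the classical identity $\sum_{n\geq1}\cos(n\phi)/n=-\log\bigl(2\abs{\sin(\phi/2)}\bigr)$. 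Your route is shorter and stays entirely within elementary Fourier series, avoiding the Hilbert-transform machinery and the Euler product; the paper's route has the virtue of isolating the single-step Hilbert transform as a reusable lemma. You also go further than the paper in two respects: you argue geometrically why the relevant critical value is a maximum of $g$ on the arc $I$ (via $g\to-\infty$ at the jump endpoints), and you prove that the critical equation has a \emph{unique} root in $I$ by the monotonicity computation for $h$. The paper simply asserts that the $y_i$ are obtained at local extrema and leaves uniqueness implicit.
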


\begin{proof}
Although $\mathrm{Im}\psi\left(e^{i\theta}\right)$ may be discontinuous
and even infinite, it is differentiable almost everywhere; in fact,
by section III.2.10 in \cite{katznelson_introduction_to_harmonic_analysis},
if $\mathrm{Re}\psi\left(e^{i\theta}\right)$ is constant in an interval,
then $\mathrm{Im}\psi\left(e^{i\theta}\right)$ is analytic there.
This means that $\mathrm{Im}\psi\left(e^{i\theta}\right)$ is piecewise
analytic, since $\mathrm{Re}\psi\left(e^{i\theta}\right)$ is piecewise
constant except for a discrete set of jumps at $\left\{ x_{i}\right\} _{i}$.

The $y_{i}$'s can therefore be calculated by finding the local minima
and maxima of $\mathrm{Im}\psi\left(e^{i\theta}\right)$, and these
in turn are obtained by differentiation. 

Our first step is to calculate $\mathrm{Im}\psi\left(e^{i\theta}\right)$.
As noted at the beginning of the section, the relation between $\mathrm{Re}\psi\left(e^{i\theta}\right)$
and $\mathrm{Im}\psi\left(e^{i\theta}\right)$ is given by the Hilbert
transform $H$. Since $H$ is a linear operator, we first compute
the Hilbert transform of a single step function.
\begin{lem}
\label{lem:hilbert_transform_of_step}Let $u\left(x\right)$ be a
periodic step function, i.e
\[
u\left(x\right)=\begin{cases}
0 & 0<\abs x<\theta_{0}\\
1 & \theta_{0}\leq\abs x\leq\pi
\end{cases}
\]
for some $0\leq\theta_{0}\leq\pi$, with $u\left(x\right)=u\left(x+2\pi\right)$.
Then 
\begin{equation}
\left(Hu\right)\left(x\right)=\frac{1}{\pi}\log\abs{\frac{\sin\left(\frac{\theta_{0}}{2}-\frac{x}{2}\right)}{\sin\left(\frac{\theta_{0}}{2}+\frac{x}{2}\right)}}.\label{eq:hilbert_transform_of_step}
\end{equation}
\end{lem}

\begin{proof}
Denote by $\sqcap_{\alpha}\left(x\right)$ the square pulse of half-width
$\alpha$ around the origin: 
\[
\sqcap_{\alpha}\left(x\right)=\begin{cases}
1 & \abs x<\alpha\\
0 & o.w.
\end{cases}.
\]
Then $u\left(x\right)$ is the sum of infinitely many square pulses,
each with half-width $\pi-\theta_{0}$: 
\[
u\left(x\right)=\sum_{k=-\infty}^{\infty}\sqcap_{\pi-\theta_{0}}\left(x+\pi+2k\pi\right).
\]
The Hilbert transform of a single square pulse $\sqcap_{\alpha}$
can readily be calculated to be
\[
\left(H\sqcap_{\alpha}\right)\left(x\right)=\frac{1}{\pi}\log\abs{\frac{\alpha+x}{\alpha-x}}.
\]
The Hilbert transform commutes with shifts, and so 
\begin{align*}
\left(Hu\right)\left(x\right) & =\frac{1}{\pi}\sum_{k=-\infty}^{\infty}\log\abs{\frac{\pi-\theta_{0}+x+\pi+2k\pi}{\pi-\theta_{0}-\left(x+\pi+2k\pi\right)}}\\
 & =\frac{1}{\pi}\log\prod_{k=-\infty}^{\infty}\abs{1+\frac{\frac{1}{2\pi}\left(2\pi-2\theta_{0}\right)}{k+\frac{1}{2\pi}\left(\theta_{0}+x\right)}}.
\end{align*}
This is an expression of the form form 
\begin{equation}
\prod_{k=-\infty}^{\infty}\abs{1+\frac{b}{k-a}},\label{eq:product_is_sine}
\end{equation}
with $b=\frac{1}{2\pi}\left(2\pi-2\theta_{0}\right)$ and $a=-\frac{1}{2\pi}\left(\theta_{0}+x\right)$.
It can be shown that a product in the form of equation (\ref{eq:product_is_sine})
is equal to 
\[
\frac{\sin\left(\pi\left(a-b\right)\right)}{\sin\left(\pi a\right)}
\]
(to see this, recall Euler's infinite product identity for the sine
function, $\sin\left(x\right)=x\prod_{n=1}^{\infty}\left(1-\frac{x^{2}}{n^{2}\pi^{2}}\right)$)
. We then have
\begin{align*}
\left(Hu\right)\left(x\right) & =\frac{1}{\pi}\log\abs{\frac{\sin\left(\pi\left(-\frac{1}{2\pi}\left(\theta_{0}+x\right)-\frac{1}{2\pi}\left(2\pi-2\theta_{0}\right)\right)\right)}{\sin\left(-\pi\frac{1}{2\pi}\left(\theta_{0}+x\right)\right)}}\\
 & =\frac{1}{\pi}\log\abs{\frac{\sin\left(\frac{\theta_{0}}{2}-\frac{x}{2}\right)}{\sin\left(\frac{\theta_{0}}{2}+\frac{x}{2}\right)}}.
\end{align*}
\end{proof}
Differentiating Equation (\ref{eq:hilbert_transform_of_step}) in
Lemma \ref{lem:hilbert_transform_of_step}, each $y_{i}$ is therefore
given by $\left(H\varphi\right)\left(x\right)$ where $x$ is a zero
of the derivative of $H\varphi$:
\begin{align*}
0 & =\frac{d}{dx}\sum_{i}\frac{\alpha_{i}}{\pi}\log\abs{\frac{\sin\left(\frac{\theta_{i}}{2}-\frac{x}{2}\right)}{\sin\left(\frac{\theta_{i}}{2}+\frac{x}{2}\right)}}\\
 & =\frac{1}{2\pi}\sum_{i}\alpha_{i}\frac{\sin\left(\frac{\theta_{i}}{2}+\frac{x}{2}\right)}{\sin\left(\frac{\theta_{i}}{2}-\frac{x}{2}\right)}\cdot\frac{-\frac{1}{2}\cos\left(\frac{\theta_{i}}{2}-\frac{x}{2}\right)\sin\left(\frac{\theta_{i}}{2}+\frac{x}{2}\right)-\frac{1}{2}\cos\left(\frac{\theta_{i}}{2}+\frac{x}{2}\right)\sin\left(\frac{\theta_{i}}{2}-\frac{x}{2}\right)}{\sin\left(\frac{\theta_{i}}{2}+\frac{x}{2}\right)^{2}}\\
 & =-\frac{1}{4\pi}\sum_{i}\alpha_{i}\left(\cot\left(\frac{\theta_{i}}{2}-\frac{x}{2}\right)+\cot\left(\frac{\theta_{i}}{2}+\frac{x}{2}\right)\right).
\end{align*}
\end{proof}

\subsection{The uniform distribution}

Let $\mu$ be the uniform distribution on $\left[-1,1\right]$, so
that 
\[
F_{\mu}=\begin{cases}
0 & x<-1\\
\frac{1}{2}\left(x+1\right) & x\in\left[-1,1\right]\\
1 & x>1
\end{cases},
\]
the inverse function is $G\left(y\right)=2y-1$, and $\varphi\left(\theta\right)=2\frac{\abs{\theta}}{\pi}-1$.
The Fourier series of $\varphi_{\mu}$ is given by 
\[
\varphi\left(\theta\right)=-\frac{8}{\pi^{2}}\sum_{k=1}^{\infty}\frac{\cos\left(\left(2k-1\right)\theta\right)}{\left(2k-1\right)^{2}},
\]
which gives 
\[
\psi\left(z\right)=-\frac{8}{\pi^{2}}\sum_{k=1}^{\infty}\frac{z^{2k-1}}{\left(2k-1\right)^{2}}.
\]
The boundary $\partial\Omega$ is given in Figure \ref{fig:uniform_distribution_boundary}.
The $y$ component of $\gamma\left(\theta\right)=\psi\left(e^{i\theta}\right)=\left(x\left(\theta\right),y\left(\theta\right)\right)$
is then given by 
\[
y\left(\theta\right)=-\frac{8}{\pi^{2}}\sum_{k=1}^{\infty}\frac{\sin\left(\left(2k-1\right)\theta\right)}{\left(2k-1\right)^{2}}.
\]
Differentiating by $\theta$, we get 
\[
\frac{d}{d\theta}y\left(\theta\right)=-\frac{8}{\pi^{2}}\sum_{k=1}^{\infty}\frac{\cos\left(\left(2k-1\right)\theta\right)}{2k-1}.
\]
If we denote by $\mathrm{gd}^{-1}\left(x\right)=\tanh^{-1}\left(\tan\left(\frac{x}{2}\right)\right)$
the inverse Gudermannian function, then a short calculation reveals
that 
\begin{align*}
\frac{d}{d\theta}y\left(\theta\right) & =-\frac{8}{\pi^{2}}\mathrm{gd}^{-1}\left(\frac{\pi}{2}-\abs{\theta}\right)\\
 & =-\frac{8}{\pi^{2}}\tanh^{-1}\left(\tan\left(\frac{\pi}{4}-\frac{\abs{\theta}}{2}\right)\right).
\end{align*}
Thus the domain $\Omega$ is bounded by the parametric curve
\[
\gamma\left(\theta\right)=\left(\frac{2\abs{\theta}}{\pi}-1,-\int_{-\pi}^{\theta}\frac{8}{\pi^{2}}\tanh^{-1}\left(\tan\left(\frac{\pi}{4}-\frac{\abs{\theta}}{2}\right)\right)\right).
\]
This integral may be solved with the assistance of computational software;
the antiderivative of $\tanh^{-1}\left(\tan\left(\frac{\pi}{4}-\frac{x}{2}\right)\right)$
is given by
\[
x\tanh^{-1}\left(\cot\left(\frac{\pi}{4}-\frac{x}{2}\right)\right)+\frac{1}{2}\left(i\left(\mathrm{Li}_{2}\left(-e^{ix}\right)-\mathrm{Li}_{2}\left(e^{ix}\right)\right)\right)+x\log\left(1-e^{ix}\right)-\log\left(1+e^{ix}\right),
\]
where $\mathrm{Li}_{n}\left(x\right)$ is the polylogarithm function,
\[
\mathrm{Li}_{n}\left(z\right)=\sum_{k=1}^{\infty}\frac{z^{k}}{k^{n}}.
\]

\section{\label{sec:open_questions}Other directions and open questions}

For a given measure $\mu$, the domain $\Omega$ in Theorem \ref{thm:conformal_skorokhod_main_theorem}
is not unique, even if we require $\Omega$ to be simply connected
and symmetric to conjugation (see Figure \ref{fig:non_uniqueness}
for two different domains giving the same distribution). Is there
any distinguishing trait to the construction given above? Can we deduce
properties of $\Omega$ from properties of $\mu$? For example, 
\begin{question}
For what measures $\mu$ is the domain $\Omega$ in Theorem \ref{thm:conformal_skorokhod_main_theorem}
convex?

\begin{figure}
\begin{centering}
\includegraphics[scale=0.4]{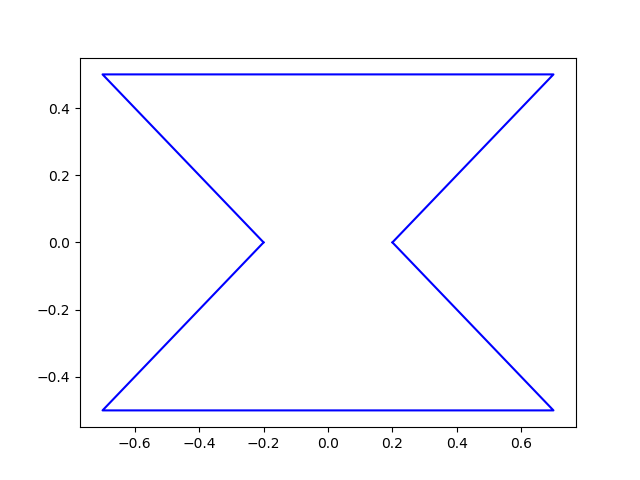}\includegraphics[scale=0.4]{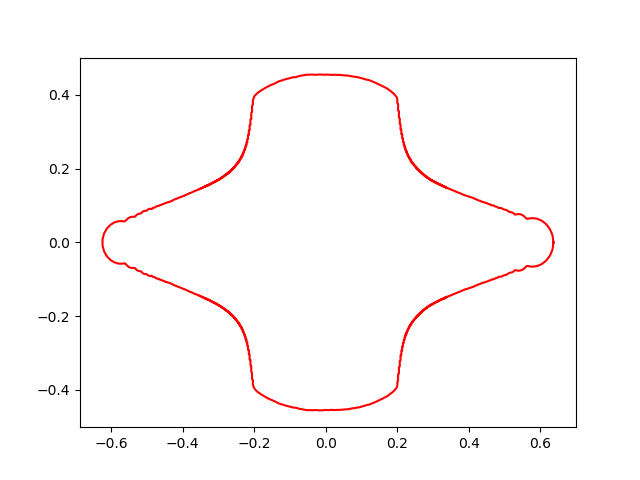}
\par\end{centering}
\caption{Two domains giving rise to the same continuous distribution. The domain
on the right hand side is an approximation obtained by using the empirical
distribution given by the domain on the left hand side.\label{fig:non_uniqueness}}

\end{figure}
\end{question}

Theorem \ref{thm:conformal_skorokhod_main_theorem} utilizes the conformal
invariance of Brownian motion and uses complex-analytic tools, and
so relies on the fact that the Brownian motion is planar. However,
we may ask similar hitting-time questions concerning the marginal
distribution of the first $n$ coordinates of a stopped Brownian motion
in higher dimensions. It is already well known that not every distribution
$\mu$ on $\r^{n}$ can be generated this way: For example, high dimensional
Brownian motion does not hit points, so any $\mu$ containing atoms
cannot be obtained. What can we say about measures $\mu$ for which
we already know that there is a Skorokhod embedding?
\begin{question}
Let $\mu$ be a distribution on $\r^{n}$ for which there exists a
Skorokhod embedding. Is there a domain $\Omega\subseteq\r^{n+m}$
for some $m$ so that $\left(Y_{T\left(X_{t},\Omega\right)}^{\left(1\right)},\ldots,Y_{T\left(X_{t},\Omega\right)}^{\left(n\right)}\right)$
is a Skorokhod embedding for $\mu$?
\end{question}

\section{Acknowledgments}

The author thanks Itai Benjamini, Krzysztof Burdzy, Ronen Eldan, Boa'z
Klartag and Dan Mikulincer for their insightful comments and suggestions.

\bibliographystyle{plain}
\bibliography{conformal_embedding_bibliography}

\end{document}